\documentclass[12pt]{article}
\usepackage[english]{babel}
\usepackage[T1]{fontenc}
\usepackage{amsmath}
\usepackage{anysize}
\usepackage{amsfonts}
\usepackage{amssymb}
\usepackage{graphicx}
\usepackage{float}
\usepackage{color}
\usepackage{xr}
\usepackage{makeidx}
\usepackage[all]{xy}
\usepackage{hyperref}
\numberwithin{equation}{section}
\newenvironment{proof}{\paragraph{Proof:}}{\hfill$\square$}
\newcommand{\til}{\widetilde}
\newcommand{\vol}{\mathrm{Vol}}

\newcommand{\lied}{\mathcal{L}}

\newcommand{\dps}{\displaystyle}

\numberwithin{equation}{section}

\newcommand{\tl}{\textless}
\newcommand{\tgt}{\textgreater}
\newtheorem{teo}{\textit{Theorem}}
\newtheorem{pro}{\textit{Proposition}}[section]
\newtheorem{dfn}{\textit{Definition}}[section]



\title{Invariant Solutions of the Yamabe equation on the Koiso-Cao Soliton}

\author{J. TORRES OROZCO}

\date{CIMAT, 2016}

\begin{document}

\maketitle

\begin{abstract}
We consider the non-trivial Ricci soliton on $\mathbb{CP}^2\#\overline{\mathbb{CP}^2}$ constructed by Koiso and Cao. It is a K\"ahler metric invariant by the $U(2)$ action on $\mathbb{CP}^2\#\overline{\mathbb{CP}^2}$. We study its Yamabe equation and prove it has exactly one $U(2)-$invariant solution up to homothecies. 
\end{abstract}


\section{Introduction}

Let $(M, g)$ be a closed Riemannian manifold of dimension $n$, $n\geq 3$. Let $\til{g}=f^{p-2}\cdot g$, where $p=\frac{2n}{n-2}$, and $f$ is a smooth positive function on $M$. Let $S_g$ and $S_{\til g}$ be the scalar curvatures of $g$ and $\til g$ respectively. If one compute $S_{\til g}$ in terms of $g$ and $f$:
\begin{equation*}
S_{\til g}=f^{1-p}\left(-\frac{4(n-1)}{n-2}\triangle_g f+S_g f\right).
\end{equation*}

\noindent{Thus $\til{g}$ has constant scalar curvature $\lambda$
if and only if $f$ satisfies the\!\!\textit{\textbf{{
Yamabe equation:}}}}
\begin{equation}
\label{yamabequa} -a_n\triangle_g f+ S_g f=\lambda f^{p-1}
\end{equation}

\noindent where $a_n=\frac{4(n-1)}{n-2}$.
\newline

On the space of Riemannian metrics on $M$ consider the Hilbert--Einstein functional $\mathcal{S}$:
\begin{equation*}
g\mapsto \mathcal{S}(g)=\frac{\int_M S_g dv_g}{\vol(M, g)^{\frac{n-2}{n}}}
\end{equation*}

\noindent where $dv_g$ is the volume element of $g$, and $\vol(M, g)=\int_M dv_g$ is the volume of $M$.
\newline

Denote by $[g]$ the conformal class of $g$, $[g]=\{f^{p-2}\cdot g| f\in C^{\infty}(M), f>0\}$. The volume element of $f^{p-2}\cdot g$ is $dv_{f^{p-2}\cdot g}=f^{\frac{n}{2}(p-2)}\ dv_g=f^{p}\ dv_g$. Then, if we restrict the Hilbert--Einstein functional to a conformal
class, we obtain:
\begin{eqnarray*}
\mathcal{S}(f^{p-2}\cdot g)&=&\frac{\int_M S_{\til g}\ dv_{\til g}}{\vol(M, \til g )^{\frac{n-2}{n}}}\\
&=&\frac{\int_M f^{1-p}(-a_n\triangle_g f+S_g f)f^p\ dv_g}{(\int_M f^p\ dv_g)^{\frac{2}{p}}}\\
&=&\frac{(\int_M a_n |\nabla f|^2+S_g f^2)\ dv_g}{||f||_p^2}.
\end{eqnarray*}

\begin{dfn} The \textbf{\textit{Yamabe functional}} of a Riemannian manifold $(M, g)$ of dimension $n\geq 3$ is defined as:
\begin{equation}
\label{yamabefun} \mathcal{Y}_g(f)=\frac{(\int_M a_n |\nabla
f|^2+S_g f^2)\ dv_g}{||f||_p^2}.
\end{equation}
\end{dfn}

This functional was first considered by Yamabe in \cite{Yamabe}. 
\newline

The Euler-Lagrange equation of $\mathcal{Y}_g$ is exactly the Yamabe equation, and therefore critical points of $\mathcal{S}|_{[g]}$ are metrics of constant scalar curvature in $[g]$. It is easy to see that $\mathcal{Y}_g$ is bounded below, then one may define:

\begin{dfn}
The \textbf{Yamabe constant} of a conformal class $[g]$ is:

$$Y(M, [g])=\inf_{h\in [g]}\mathcal{S}(h).$$

If $h\in [g]$ satisfies $\mathcal{S}(h)=Y(M, [g])$, i.e.,
$h$ realizes the infimum, then $h$ has constant scalar curvature.
These minimizing metrics are called \textbf{Yamabe metrics}.
\end{dfn}

A fundamental theorem of Yamabe-Trudinger-Aubin-Schoen (\cite{Yamabe}, \cite{Trudinger}, \cite{Aubin}, \cite{Schoen}) asserts that the infimum is always achieved. Then, there exists a constant scalar curvature metric in every conformal class. 
\newline

If $Y(M, [g])\leq 0$, there is exactly one metric in $[g]$ of constant scalar curvature up to homothecies. Also, if $g$ is Einstein and it  is not the round metric on the sphere, M. Obata proved that one also has uniqueness \cite{Obata}. But in general there are multiple solutions (see for instance \cite{Brendle}, \cite{HenryPetean}, \cite{LimaPicc}, \cite{Petean1}, \cite{Pollack}). It is an important problem to understand if there are other general situations where uniqueness holds. For this purpose we will focus on Ricci solitons.
\medskip

\begin{dfn}[Ricci Soliton]
Let $(M, g)$ be a Riemannian manifold of dimension $n$ such that
\begin{equation}
-2Ric(g)=\lied_X g+2\mu g \label{solitricci}
\end{equation}
holds for some constant $\mu$ and some complete vector field X on
$M$. We say $g$ is a Ricci soliton. Here $Ric(g)$ is the
Ricci curvature of $M$ and $\dps{\lied_X g}$ is the Lie derivative
of the metric in the direction of $X$.
\end{dfn}

If $\mu$ is negative, zero or positive $g$ is said to be a \textbf{shrinking, steady,} or \textbf{expanding} soliton. If $X\equiv0$, then $Ric(g)=-\mu g$, and $g$ is Einstein. Then a Ricci soliton may be regarded as a generalization of an Einstein metric.
\newline

If $X=grad(u)$, equation (\ref{solitricci}) becomes:

\begin{equation*}
Ric(g)+\nabla\nabla u+\mu g=0
\end{equation*}
In this case $g$ is called \textbf{gradient Ricci soliton}.
\newline

For the Yamabe equation one is mostly interested in the compact case. Results of Hamilton \cite{Hamilton} and Ivey \cite{Ivey} give that compact steady
or expanding gradient Ricci solitons are necessarily Einstein. Perelman showed that any compact Ricci soliton is gradient \cite{Perelman}. Therefore, in the compact case, non-trivial Ricci solitons,  that is, Ricci solitons which are not Einstein metrics, are gradient shrinking Ricci solitons.
\newline

Since Ricci solitons are natural generalizations of Einstein metrics, in view of Obata's theorem it seems natural to ask if the Yamabe equation on a closed Ricci soliton only has one solution up to homothecies.
\newline

Note that since Ricci curvature is invariant under rescalings of the metric, for $\mu< 0$ we obtain:
\begin{equation*}
-2Ric(\mu g)=-2Ric(g)=\lied_X g+2\mu g=\lied_{\frac{1}{\mu}X}\mu g+2\mu g.
\end{equation*}
This implies that $\mu g$ is a shrinking Ricci soliton with associated constant $-1$. Then, we may rescale the soliton $g$ so that $\mu$ is $-1$.
\newline

There are few examples of non-trivial Ricci solitons.  The only known compact non-trivial Ricci solitons are rotationally symmetric K\"ahler metrics. For real dimension $4$ the first one was constructed by Koiso \cite{Koiso}, and independently by Cao \cite{Cao1}. It is a non-Einstein shrinking soliton on $\mathbb{CP}^2\#\overline{\mathbb{CP}^2}$ with symmetry $U(2)$ and
positive Ricci curvature. In fact, Cao give a family of homothetically gradient K\"{a}hler-Ricci solitons on $\mathbb{C}^n$ which includes the Hamilton soliton in case $n=1$. The other example in dimension $4$ was found by Wang-Zhu \cite{WangZhu}. They proved the existence of a gradient K\"ahler-Ricci soliton on  $\mathbb{CP}^2\#2\overline{\mathbb{CP}^2}$ with $U(1)\times U(1)$ symmetry. 
\medskip

Compact homogeneous Ricci solitons are Einstein. Thus, next case to study are cohomogeneity one Ricci solitons, studied for instance by Dancer, Hall and Wang \cite{DW}, \cite{DWH}. The only known $4-$dimensional non-trivial metric of cohomogeneity one is the example, mentioned above, constructed by Koiso and Cao on $\mathbb{CP}^2\#\overline{\mathbb{CP}^2}$. In this article we will study the Yamabe equation of this metric for $U(2)-$invariant functions. 
\newline

It is known that on any compact Riemannian manifold $(M, g)$ admitting the action of a compact Lie group $G$, there exists a conformal $G$-invariant metric to $g$ of constant scalar curvature \cite{HebeyVaugon}. Hence, in the case of the Koiso-Cao soliton there exists a $U(2)$-invariant solution to the Yamabe problem. The aim is to prove uniqueness:

\begin{teo}
\label{thm:1}
There exists a unique $U(2)$-invariant solution to the Yamabe equation on  $\mathbb{CP}^2\#\overline{\mathbb{CP}^2}$ with the Koiso-Cao metric.
\end{teo}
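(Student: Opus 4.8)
The plan is to reduce the PDE to an ODE on an interval by exploiting the cohomogeneity-one structure, and then prove uniqueness of the ODE solution by a maximum-principle/monotonicity argument. First I would recall that the Koiso--Cao metric on $\mathbb{CP}^2\#\overline{\mathbb{CP}^2}$ is invariant under the $U(2)$ action whose principal orbits are $3$-spheres (lens spaces $S^3$), with two special orbits which are the $\mathbb{CP}^1$'s; thus $M$ fibers over an interval $[0,L]$ with coordinate $t$ (geodesic distance from one special orbit), and a $U(2)$-invariant function $f$ is exactly a function $f(t)$ on $[0,L]$ subject to Neumann-type smoothness conditions at the endpoints $t=0,L$. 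Writing the metric in the cohomogeneity-one form $g = dt^2 + \text{(orbit metric depending on }t)$, the Laplacian of a radial function becomes $\triangle_g f = f'' + \frac{\xi'(t)}{\xi(t)} f'$ where $\xi(t)$ is the volume density of the orbit at parameter $t$ (so $dv_g = \xi(t)\,dt\wedge(\text{fixed form on }S^3)$), and the scalar curvature $S_g$ is an explicit smooth positive function $S(t)$ on $[0,L]$ (positivity of Ricci, hence we must check $S>0$, which holds for this soliton). So \eqref{yamabequa} becomes the singular two-point boundary value problem
\begin{equation*}
-a_4\Big(f'' + \frac{\xi'}{\xi}\,f'\Big) + S(t)\,f = \lambda\, f^{\,p-1}, \qquad f>0,\quad f'(0)=f'(L)=0,
\end{equation*}
with $n=4$, $p=4$, $a_4=6$; here $\lambda$ is not prescribed but is part of the unknown, normalized (say) by $\|f\|_{p}=1$ or equivalently by fixing the value $\lambda = \mathcal{Y}_g(f)$.

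Next I would set up the uniqueness argument. Existence of at least one invariant solution is already guaranteed by \cite{HebeyVaugon} (as noted in the introduction), so only uniqueness is at stake. Suppose $f_1, f_2$ are two positive invariant solutions with associated constants $\lambda_1,\lambda_2$. The standard first move is to show $\lambda_1=\lambda_2$: any invariant solution is a critical point of $\mathcal{Y}_g$ restricted to invariant functions, and by the symmetric criticality principle it is a critical point of $\mathcal{Y}_g$ on all of $C^\infty(M)$, hence a solution of the genuine Yamabe equation; then one argues that the minimizing $U(2)$-invariant solution is the unique one with the \emph{least} $\lambda$, and any other invariant solution would have strictly larger $\lambda$ — but to rule those out I instead work directly with the ODE. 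Concretely, I would use the substitution $f = e^{w}$ or compare $u = f_1/f_2$: from the two equations one derives, after multiplying by appropriate test functions and integrating against the weight $\xi(t)\,dt$, an identity of the form
\begin{equation*}
a_4\int_0^L \xi\,\Big(\frac{f_2\,f_1' - f_1\,f_2'}{?}\Big)^{2} = (\lambda_1 - \lambda_2)\int(\cdots) + (\text{sign-definite terms in } f_1^{p-2}-f_2^{p-2}),
\end{equation*}
the point being that for $p=4$ the nonlinearity $f^{p-1}=f^3$ is convex and the Picone-type/Díaz--Saá inequality forces both sides to vanish, yielding $f_1 \equiv c\, f_2$; plugging back shows $c=1$ once the normalization is fixed (or $c$ arbitrary, reflecting the stated "up to homothecies").

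I expect the main obstacle to be the interaction between the sign of $\lambda$ and the convexity argument: the clean Díaz--Saá / sub-supersolution uniqueness for $-\triangle f + Sf = \lambda f^{p-1}$ works immediately when $\lambda \le 0$ (this is already remarked in the introduction), but the Koiso--Cao soliton has positive scalar curvature and the relevant $\lambda$ is \emph{positive}, so the naive monotonicity of $f\mapsto \lambda f^{p-1}/f = \lambda f^{p-2}$ goes the wrong way. The resolution must use the specific structure of the problem — either (i) a sharp comparison of the Yamabe constant $Y(M,[g])$ with the threshold $Y(S^4) $ (or with $n(n-1)\mathrm{Vol}^{2/n}$) showing the invariant functional is sub-critical enough that the invariant minimizer is non-degenerate and isolated, combined with a Morse-theoretic/mountain-pass count on the one-dimensional problem; or (ii) a direct ODE shooting analysis: parametrize invariant solutions by the value $a=f(0)>0$, show the shooting map $a\mapsto f'(L;a)$ (with $\lambda$ eliminated by the normalization) is strictly monotone, so there is exactly one zero. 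Option (ii) is the most robust for a cohomogeneity-one problem: the key technical lemma will be a Sturm-type comparison showing that $\partial f/\partial a$ cannot vanish at $t=L$, which uses positivity of $S(t)$ and the explicit asymptotics of $\xi(t)$ near the two special orbits (where $\xi\sim c\,t$ resp.\ $\xi \sim c'(L-t)$, giving the correct Bessel-type behavior that makes the singular endpoints harmless). Verifying that comparison with the \emph{honest} Koiso--Cao profile functions — rather than with a model — is where the real work lies, and it is the step I would budget the most space for.
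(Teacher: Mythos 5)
Your reduction of the problem to a Neumann boundary value problem on the orbit interval is exactly the paper's starting point, and of your two proposed routes, option (ii) --- a shooting argument in the initial value $a=f(0)$ showing that two distinct solutions cannot both satisfy the Neumann condition at the far endpoint --- is indeed the strategy the paper follows (with the normalization $\lambda=1$ rather than $\|f\|_p=1$, which is equivalent up to homothety). You also correctly diagnose that the positivity of $\lambda$ kills the Picone/D\'iaz--Sa\'a convexity argument, so that first portion of your sketch is a dead end you rightly abandon.

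The genuine gap is that the decisive lemma --- \emph{why} the shooting map is monotone --- is left unproved, and the mechanism you gesture at (a Sturm-type comparison for $\partial f/\partial a$ using positivity of $S(t)$ and the asymptotics of the volume density $\xi$) is not the one that actually works and would not close on its own. Positivity of $S$ is not enough: for the cubic nonlinearity the function $v_t(x)=x^3-S(t)x$ is \emph{decreasing} for $x<\sqrt{S(t)/3}$, so a comparison of two solutions at a critical point of their difference could a priori go either way. The paper's argument needs three structural facts you do not identify: (1) $S(t)$ is monotone decreasing on $[0,\beta]$, which is proved from the positivity of $Ric(H,H)$; (2) an a priori pinching: any invariant solution is monotone decreasing with $\sqrt{S(\beta)}\le\phi(t)\le\sqrt{S(0)}$, obtained by showing that $\phi$ can have no interior local minimum (else $\phi^2>S$ propagates and the Neumann condition at $\beta$ fails); and (3) the numerical inequality $\sqrt{S(0)/3}<\sqrt{S(\beta)}$, i.e.\ $4-2c_o<3(4+2c_o)$, i.e.\ $c_o>-1$, which places \emph{every} value of \emph{every} solution in the range where $v_t$ is strictly increasing. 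Only with all three does one get that each critical point of the difference $F=\phi_{s_2}-\phi_{s_1}$ of two solutions is a strict local minimum, hence $F'>0$ on $(0,\beta]$ and the two solutions cannot both satisfy $F'(\beta)=\phi_{s_2}'(\beta)-\phi_{s_1}'(\beta)=0$. Since you explicitly defer "the real work" to this step, the proposal as it stands establishes the framework but not the theorem.
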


This paper is organized as follows. Section 2 is devoted to the description of the Koiso-Cao soliton. We describe the framework for a $U(2)-$invariant metric defined on $S^3\times (\alpha, \beta)$ to be extended to $\mathbb{CP}^2\#\overline{\mathbb{CP}^2}$, as a K\"ahler-Ricci soliton. We derive an ordinary differential equation and its initial conditions, whose solution determines the Koiso-Cao metric. In Section 3 we discuss the curvature of the Koiso-Cao soliton. We prove that it has positive Ricci curvature, and we show that the scalar curvature is a monotone function in the orbit space. We also present the Yamabe equation associated to the Koiso-Cao metric on  $\mathbb{CP}^2\#\overline{\mathbb{CP}^2}$. Finally, Section 4 contains the proof of the main Theorem \ref{thm:1}.
\newline

\textbf{Acknowledgements:} This work is part of my Ph.D. thesis. I want to express my sincere gratitud to my advisors Jimmy Petean and Pablo Su\'arez-Serrato. Their consistent support and positive guidance has been essential to accomplish my academic objectives. I appreciate all different interesting conversations and their generous encourgement. Financial support from The National Council of Science and Technology, CONACyT, through its Fellow Student Program is also acknowledged.

\section{Reviewing the Koiso-Cao Ricci soliton on $\mathbb{CP}^2\#\overline{\mathbb{CP}^2}$}

Koiso and Cao constructed $U(n)-$invariant gradient shrinking K\"ahler-Ricci solitons on twisted projective line bundles over $\mathbb{CP}^{n-1}$ for $n\geq 2$. Cao's construction consists in giving conditions of the K\"ahler potential corresponding to a $U(n)-$invariant K\"ahler metric defined on $\mathbb{C}^n\setminus \{0\}$ to obtain a compact shrinking Ricci soliton. The K\"ahler potential is a smooth function defined on $(-\infty, \infty)$ with certain asymptotic conditions at $-\infty$ and $\infty$. 
\medskip

In this section we will review this construction from a different point of view for $n=2$, which is the Koiso-Cao soliton. This will help us to study the Yamabe equation. For further details see \cite{Koda}. 
\newline

A $U(2)-$invariant metric $(\mathbb{CP}^2\#\overline{\mathbb{CP}^2}, \mathbf{g})$ can be described in the following way. 
The regular orbits of the $U(2)-$action is an open dense subset diffeomorphic to $S^3\times (\alpha, \beta)$, and $U(2)$ acts on $S^3$.  There are two singular orbits diffeomorphic to $S^2$. The invariant metric $g$ on $S^3\times (\alpha, \beta)$ is written as $g=dt^2+g_t$, where $g_t$ is a $U(2)-$invariant metric on $S^3$.

Let $f$ and $h$ be positive smooth functions defined on $(\alpha, \beta)$. For each $t\in(\alpha, \beta)$ the $U(2)-$invariant metric $g_t$ is such that the principal $(S^1, f^2(t)g_o)-$bundle with  projection $\pi: (S^3, g_t)\longrightarrow (S^2, h^2(t) g_o^2)$ is a Riemannian submersion. Here $\pi: S^3\longrightarrow S^2$ is the Hopf fibration, and $g_o$ and $g_o^2$ are the round metrics on $S^1$ and $S^2$, respectively. 
\medskip
%
%
%

The metric $g$ can be extended to a smooth metric $\mathbf{g}$ on $\mathbb{CP}^2\#\overline{\mathbb{CP}^2}$ provided the following asympthotical conditions hold:
\begin{eqnarray}
\label{eqn:extcomp}
f(\alpha)=f(\beta)=0, \quad f'(\alpha)=-f'(\beta)=1, \nonumber \\
h(\alpha)\neq h(\beta)\neq 0, \quad  h'(\alpha)=h'(\beta)=0, \\
f^{2k}(\alpha)=f^{2k}(\beta)= h^{2k+1}(\alpha)=h^{2k+1}(\beta)=0. \nonumber
\end{eqnarray}

If we let $X$, $Y$ and $Z$ be $SU(2)$--left invariant vector fields on $S^3$ given by:
\begin{equation*}
X:\left(\begin{array}{c} v\\ w\end{array}\right)\to \left(\begin{array}{c} iv\\ -iw\end{array}\right), \quad Y:\left(\begin{array}{c} v\\ w\end{array}\right)\to \left(\begin{array}{c} w\\ -v\end{array}\right), \quad Z:\left(\begin{array}{c} v\\ w\end{array}\right)\to \left(\begin{array}{c} iw\\ iv\end{array}\right)
\end{equation*}

\noindent and we call $H=\frac{\partial}{\partial t}$, then
$\displaystyle{E=\left\{H, \frac{X}{f}, \frac{Y}{h}, \frac{Z}{h}\right\}}$ is an orthonormal frame on $(S^3\times (\alpha, \beta), g)$. 
\newline

We have the following commuting relations:
\begin{equation*}\label{eqns:LieB}
[X,Y ]=2Z \quad \quad [Y, Z]=2X \quad \quad [Z, X]=2Y \quad \quad
[H, T]=0
\end{equation*}
for every $T\in E$. Then the Levi-Civita connection induced by $g$ can be computed using Koszul's formula. The following Table contains the computations in terms of the functions $f$ and $h$.
\begin{table}[H]
\centering
\begin{tabular}{l |c |l |l |r}
$\nabla$ & $X$ & $Y$ & $Z$ & $H$\\
\hline

$X$ & $-ff'H$ & $\frac{2h^2-f^2}{h^2}Z$ & $\frac{(f^2-2h^2)}{h^2} Y$ & $\frac{f'}{f} X$ \\

$Y$ & $-\frac{f^2}{h^2}Z$ & $-hh'H$ & $X$ & $\frac{h'}{h}Y$\\

$Z$ & $\frac{f^2}{h^2}Y$ & $-X$ & $-hh'H$ & $\frac{h'}{h}Z$\\

$H$ & $\frac{f'}{f}X$ & $\frac{h'}{h}Y$ & $\frac{h'}{h}Z$ & 0
\end{tabular}
\caption{Levi-Civita connection}
\label{tab:Conex}
\end{table}
\medskip

The almost complex structure $J$ of $\mathbb{CP}^2\#\overline{\mathbb{CP}^2}$ restricted to $S^3\times
(\alpha, \beta)$ is  given by:
\begin{equation*}
J(H)=\frac{X}{f},\quad \quad J(\frac{X}{f})=-H, \quad \quad
J(Y)=Z,\quad \quad J(Z)=-Y.
\end{equation*}

Recall that the Hermitian metric  $g$ is K\"ahler if and only if
\begin{equation*}
\nabla_XJY=J\nabla_X Y
\end{equation*}

\noindent for every $X, Y$ vector fields on $S^3\times(\alpha, \beta)$. From Table (\ref{tab:Conex}) we obtain the computations of $J\nabla_A B$, for every $A, B\in E$:

\begin{table}[H]
\centering
\begin{tabular}{l |c |l |l |r}
$J\nabla$ & $X$ & $Y$ & $Z$ & $H$\\
\hline

$X$ & $-f'X$ & $\frac{f^2-2h^2}{h^2}Y$ & $\frac{(f^2-2h^2)}{h^2} Z$ & $-f'H$ \\

$Y$ & $\frac{f^2}{h^2}Y$ & $-\frac{hh'}{f}X$ & $-fH$ & $\frac{h'}{h}Z$\\

$Z$ & $\frac{f^2}{h^2}Z$ & $fH$ & $-\frac{hh'}{f}X$ & $-\frac{h'}{h}Y$\\

$H$ & $-f'H$ & $\frac{h'}{h}Z$ & $-\frac{h'}{h}Y$ & 0
\end{tabular}
\caption{$J\nabla_A B$}
\label{tab:JConex}

\end{table}

We also compute:
\medskip

\begin{table}[H]
\centering
\begin{tabular}{l ||c |l |l |r}
$\nabla$ & $JX=-fH$ & $JY=Z$ & $JZ=-Y$ & $JH=\frac{X}{f}$\\
\hline

$X$ & $-f'X$ & $\frac{(f^2-2h^2)}{h^2} Y$ & $\frac{(f^2-2h^2)}{h^2} Z$& $-f'H$ \\

$Y$ & $-\frac{fh'}{h}Y$ & $X$ & $hh' H$ & $-\frac{f}{h^2}Z$\\

$Z$ & $-\frac{fh'}{h}Z$ & $-hh'H$ & $X$ & $\frac{f}{h^2}Y$\\

$H$ & $-f'H$ & $\frac{h'}{h}Z$ & $-\frac{h'}{h}Y$ & 0
\end{tabular}
\caption{$\nabla_A J(B)$}
\label{tab:ConexJ}
\end{table}

By comparing Tables (\ref{tab:ConexJ}) and (\ref{tab:JConex}) we deduce  $g$ is a K\"ahler metric if:
\begin{equation}\label{eqn:CondK}
f=-h h'.
\end{equation}

This and conditions (\ref{eqn:extcomp}) imply:
\begin{eqnarray}
\label{eqns:Condfh}
h(\alpha)h''(\alpha)=-h(\beta)h''(\beta)=-1.
\end{eqnarray}

Therefore, for any positive function $h$ defined on $[\alpha, \beta]$ satisfying $h'(\alpha)=h'(\beta)=0$, and (\ref{eqns:Condfh}), $(S^3\times(\alpha, \beta), g)$ is a K\"ahler metric which extends to a K\"ahler metric $\mathbf{g}$ on $\mathbb{CP}^2\#\overline{\mathbb{CP}^2}$.
\newline

We have that $g$ is a $U(2)-$invariant gradient shrinking Ricci soliton if there exists a $U(n)-$invariant function $u$ such that the equation
\begin{equation}
\label{eqn:Solg}
Ric(g)+Hess(u)- g=0
\end{equation}
holds. 
\medskip

Since the K\"ahler metric $g$ is in particular Hermitian, it satisfies $g(JU, JV)=g(U, V)$ for every $U, V$ vector fields. It also holds $Ric(JU, JV)=Ric(U, V)$. This implies that, if in addition $g$ is a gradient Ricci soliton, $Hess(u)(JU, JV)=Hess(u)(U, V)$. 
\newline

Let $\psi$ be a smooth function on $S^3\times (\alpha,\beta)$ invariant by the action of $U(2)$,  and simply write $\psi: (\alpha, \beta)\to \mathbb{R}$. We compute the Hessian of $\psi$ using Table (\ref{tab:Conex}).
\newline

In the orthonormal basis $\displaystyle{\left\{H, \frac{X}{f}, \frac{Y}{h}, \frac{Z}{h}\right\}}$,
the Hessian of $\psi$ diagonal and
\begin{eqnarray}
\label{eqns:Hess}
Hess(\psi)\left(\frac{X}{f}, \frac{X}{f}\right)&=&\frac{f' \psi'}{f} \nonumber \\
Hess(\psi)\left(\frac{Y}{h}, \frac{Y}{h}\right)&=&\frac{h' \psi'}{h}\nonumber \\
Hess(\psi)\left(\frac{Z}{h}, \frac{Z}{h}\right)&=&\frac{h' \psi'}{h}\\
Hess(\psi)(H, H)&=&\psi''.\nonumber
\end{eqnarray}

If $u$ is a potential function  determining a gradient shrinking Ricci soliton $g$, the Hessian of $u$ is $J-$invariant. This happens if and only if
\begin{eqnarray*}
Hess(u)(H, H) &=&Hess(u)(JH, JH)=Hess(u)\left(\frac{X}{f}, \frac{X}{f}\right)\\
Hess(u)\left(\frac{Y}{h}, \frac{Y}{h}\right)&=&Hess(u)\left(J\frac{Y}{h},J\frac{Y}{h}\right)=Hess(u)\left(J\frac{Z}{h},J\frac{Z}{h}\right),	
\end{eqnarray*}

\noindent that is, if and only if
\begin{equation*}
\label{eqn:Hessian-invariant}
u''=\frac{f'u'}{f}.
\end{equation*}

Solving for $u'$ we obtain:
\begin{equation*}
u'=cf
\end{equation*}

\noindent for some constant $c$. Note that $c\neq 0$, if $g$ is a non-trivial Ricci soliton. Substituting (\ref{eqn:CondK}) we obtain:
\begin{equation*}
u'=-chh'=-\frac{c}{2}\frac{d}{dt}h^2.
\end{equation*}

Then
\begin{equation*}
u=\frac{-ch^2}{2}+d
\end{equation*}

\noindent for some constants $c,d$, with $c\neq 0$. Without loss of generality, we may set $d=0$, and so:
\begin{equation}
\label{fun:Ricci}
u=\frac{-ch^2}{2}
\end{equation}

On the other hand, the Ricci curvature is given by: 
\begin{equation}
\label{eqn:RicciCurv1}
Ric(H, H)=-\frac{f''}{f}-2\frac{h''}{h}
\end{equation}
\begin{equation}
\label{eqn:RicciCurv2}
Ric\left(\frac{X}{f}, \frac{X}{f}\right)=-\frac{f''}{f}-2\frac{f'h'}{fh}+2\frac{f^2}{h^4}
\end{equation}
\begin{equation}
\label{eqn:RicciCurv3}
Ric\left(\frac{Y}{h}, \frac{Y}{h}\right)=Ric\left(\frac{Z}{h}, \frac{Z}{h}\right)=-\frac{h''}{h}-\frac{f'h'}{fh}-\frac{h'^2}{h^2}+\frac{4}{h^2}-2\frac{f^2}{h^4}.
\end{equation}

Since also Ricci curvature has to be $J$--invariant, $Ric(H, H)=Ric(\frac{X}{f}, \frac{X}{f})$. Hence:
\begin{equation*}
\frac{h''}{h}-\frac{f^2}{h^4}-\frac{f'h'}{fh}=0.
\end{equation*}
\medskip

Now we write the soliton equation using the previous calculations for each direction.
\medskip

In the direction of $H$, by (\ref{eqn:CondK}) we have:
\begin{eqnarray*}
\label{eqns:SolH}
0&=&Ric(H, H)+Hess(u)(H, H)- g(H, H)\\
&=&-\frac{f''}{f}-2\frac{h''}{h}+u''-1\\
&=&-\frac{f''}{f}-2\frac{h''}{h}-c h'^2-c h h''-1\\
&=&-\frac{5h''}{h}-\frac{h'''}{h'}-1-c(hh''+h'^2).
\end{eqnarray*}

Then we obtain the equation:
\begin{equation}\label{eqn:SolH}
\frac{h'''}{h'}+\frac{5h''}{h}+1+c(hh''+h'^2)=0
\end{equation}

Proceeding similarly in the direction of $X$,
\begin{eqnarray*}
0&=&Ric(X, X)+Hess(u)(X, X)- g(X, X)\\
&=&f^2\left(-\frac{f''}{f}-2\frac{f'h'}{fh}+2\frac{f^2}{h^4}\right)+cf^2f'-f^2\\
&=&-5hh'^2h''-h^2h'h'''-h^2h'^2-ch^2h'^2(hh''+h'^2),
\end{eqnarray*}
and then
\begin{equation}\label{eqn:SolX}
h^2h'h'''+5hh'^2h''+h^2h'^2(1+c(hh''+h'^2))=0,
\end{equation}
which is equivalent to (\ref{eqn:SolH}).
\newline

In the direction of $Y$, 

\begin{eqnarray*}
0&=&Ric(Y, Y)+Hess(u)(Y, Y)- g(Y, Y)\\
&=&h^2\left(-\frac{h''}{h}-\frac{f'h'}{fh}-\frac{h'^2}{h^2}+\frac{4}{h^2}-2\frac{f^2}{h^4}\right)+hh'u'-h^2\\
&=&h^2\left(-\frac{h''}{h}-\frac{f'h'}{fh}-\frac{h'^2}{h^2}+\frac{4}{h^2}-2\frac{f^2}{h^4}\right)-ch^2h'^2-h^2\\
&=&-2hh''-4h'^2+4-ch^2h'^2-h^2.
\end{eqnarray*}
We then obtain:
\begin{equation}\label{eqn:SolY}
2hh''+4h'^2-4+h^2(1+ch'^2)=0.
\end{equation}

If we differentiate equation (\ref{eqn:SolY}) with respect to $t$, we obtain:
\begin{eqnarray*}
0&=&2(hh'''+h'h'')+8h'h''+h^2(2ch'h'')+2hh'(1+ch'^2)\\
&=&2hh'''+10h'h''+2ch^2h'h''+2hh'+2chh'^3.
\end{eqnarray*}

Then
\begin{equation*}
hh'''+5h'h''+hh'(1+c(hh''+h'^2))=0
\end{equation*}
It follows that if $h$ solves (\ref{eqn:SolY}), it also solves (\ref{eqn:SolH}) and (\ref{eqn:SolX}).
\newline

We have:

\begin{pro}
\label{prop:SolK}
The $U(2)-$invariant K\"ahler metric $g=dt^2+g_t$ defined on $S^3\times(\alpha, \beta)$ by the function $h$, extends to a gradient shrinking K\"ahler-Ricci soliton $\mathbf{g}$ on $\mathbb{CP}^2\#\overline{\mathbb{CP}^2}$ if $h$ is a smooth positive function which, for a constant $c\in \mathbb{R}$, solves the equation:
\begin{equation}
\label{eqn:SolR}
2hh''+4h'^2-4+h^2(1+ch'^2)=0
\end{equation} 

\noindent with $h'(\alpha)=h'(\beta)=0$, and $h(\alpha)h''(\alpha)=-h(\beta)h''(\beta)=-1$. It follows that $h(\alpha)=\sqrt{6}$ and $h(\beta)=\sqrt{2}$.
\end{pro}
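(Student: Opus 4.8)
The plan is to assemble the reductions already carried out above and then read off the boundary values of $h$ from the equation itself. Assume $h$ is a smooth positive function solving (\ref{eqn:SolR}) for some $c\in\mathbb{R}$, with $h'(\alpha)=h'(\beta)=0$ and $h(\alpha)h''(\alpha)=-h(\beta)h''(\beta)=-1$. Put $f:=-hh'$ and $u:=-\tfrac{c}{2}h^{2}$. Condition (\ref{eqn:CondK}) holds by the definition of $f$, so $g=dt^{2}+g_{t}$ is K\"ahler; and since $u$ depends only on $h$ it is $J$--invariant with $u'=cf$, which is exactly the form of the potential found above in (\ref{fun:Ricci}). Equation (\ref{eqn:SolR}) is (\ref{eqn:SolY}), and differentiating it shows it also implies (\ref{eqn:SolH}) and (\ref{eqn:SolX}); these three equations are precisely the $H$--, $X$-- and $Y$--components of the soliton equation (\ref{eqn:Solg}) for this $u$, so $g$ satisfies (\ref{eqn:Solg}) on $S^{3}\times(\alpha,\beta)$. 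By the discussion preceding the statement, it then remains only to check that the asymptotic conditions (\ref{eqn:extcomp}) hold, which guarantees that $g$ extends smoothly to a metric $\mathbf{g}$ on $\mathbb{CP}^{2}\#\overline{\mathbb{CP}^{2}}$, necessarily still a gradient shrinking K\"ahler--Ricci soliton.

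To verify (\ref{eqn:extcomp}): from $h'(\alpha)=h'(\beta)=0$ one gets $f(\alpha)=f(\beta)=0$, and since $f'=-(h')^{2}-hh''$ the hypothesis gives $f'(\alpha)=-h(\alpha)h''(\alpha)=1$ and $f'(\beta)=-h(\beta)h''(\beta)=-1$. The remaining, parity-type conditions --- vanishing of all even-order derivatives of $f$ and all odd-order derivatives of $h$ at $\alpha$ and $\beta$ --- I would obtain from a reflection argument: solving (\ref{eqn:SolR}) for $h''$ gives $h''=\bigl(4-4(h')^{2}-h^{2}(1+c(h')^{2})\bigr)/(2h)$, a second-order ODE whose right-hand side is smooth on $\{h>0\}$ and depends on $h'$ only through $(h')^{2}$. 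Hence $t\mapsto h(2\alpha-t)$ solves the same equation with the same data at $t=\alpha$ (using $h'(\alpha)=0$), so by uniqueness $h$ is even about $\alpha$; therefore $f=-hh'$ is odd about $\alpha$, and the parity conditions hold there. The same argument at $\beta$ finishes the verification of (\ref{eqn:extcomp}).

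For the boundary values, I would simply evaluate (\ref{eqn:SolR}) at the endpoints. Setting $h'(\alpha)=0$ in (\ref{eqn:SolR}) gives $2h(\alpha)h''(\alpha)-4+h(\alpha)^{2}=0$, and substituting $h(\alpha)h''(\alpha)=-1$ yields $h(\alpha)^{2}=6$, i.e.\ $h(\alpha)=\sqrt{6}$; likewise, using $h'(\beta)=0$ and $h(\beta)h''(\beta)=1$ gives $h(\beta)^{2}=2$, i.e.\ $h(\beta)=\sqrt{2}$. In particular $h(\alpha)\neq h(\beta)$ and both are nonzero, which is the remaining item of (\ref{eqn:extcomp}).

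The main obstacle is the smooth extension across the two singular orbits: everything else is either a one-line substitution or was already established above, whereas the vanishing of the higher-order derivatives of $f$ and $h$ is what makes the metric close up smoothly, and this I would get from the reflection symmetry of (\ref{eqn:SolR}) together with uniqueness for second-order ODEs. One further point to keep in mind is that $f=-hh'$ must be strictly positive on $(\alpha,\beta)$ for $g$ to be an honest Riemannian metric; this requires $h'<0$ throughout the open interval, which is consistent with $h''(\alpha)=-1/h(\alpha)<0$ and $h''(\beta)=1/h(\beta)>0$ and should be recorded as part of the argument.
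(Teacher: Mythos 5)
Your proof is correct and follows essentially the same route as the paper, which states this proposition as a summary of the preceding derivation and obtains $h(\alpha)=\sqrt{6}$, $h(\beta)=\sqrt{2}$ exactly as you do, by evaluating (\ref{eqn:SolR}) at the endpoints using $h'=0$ and $hh''=\mp 1$. Your reflection-symmetry argument for the higher-order parity conditions in (\ref{eqn:extcomp}) supplies a detail the paper leaves implicit (it records the same symmetry of solutions about critical points only later, in the proof of Proposition \ref{prop:Uniquec}).
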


Without loss of generality we may choose $\alpha=0$.
\medskip 

\begin{pro}
\label{prop:Uniquec}
There exists a unique constant $c_o$ such that for the solution of (\ref{eqn:SolR}) satisfying $h_{c_o}(0)=\sqrt{6}$, $h_{c_o}'(0)=0$ there exists $\beta>0$ such that $h_{c_o}'<0$ on $(0, \beta)$, $h_{c_o}'(\beta)=0$ and $h_{c_o}(\beta)=\sqrt{2}$. If there exists $\beta>0$ for which $h_c'<0$ on $(0, \beta)$, $h_c'(\beta)=0$ and $h_c(\beta)>\sqrt{2}$ (or $h_c(\beta)<\sqrt{2}$) then $c_o<c$ (resp. $c<c_o$). The constant $c_o$ can be computed numerically, its aproximation is $c_o=-0.5276195198969626$.  
\end{pro}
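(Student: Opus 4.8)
The plan is to reduce the second-order equation (\ref{eqn:SolR}) to a first-order linear ODE by using $v:=(h')^2$ as a function of $h$ on the interval where $h$ is strictly monotone. Since $h''=\tfrac12\,dv/dh$ there, (\ref{eqn:SolR}) becomes
\begin{equation*}
h\,\frac{dv}{dh}+(4+ch^2)\,v=4-h^2,
\end{equation*}
whose integrating factor is $\mu(h)=h^4e^{ch^2/2}$. Writing $W:=\mu v=h^4e^{ch^2/2}v$ gives $W'(h)=h^3(4-h^2)e^{ch^2/2}$, and the substitution $s=h^2$ makes the antiderivative elementary, so $W$ is known explicitly (for $c\neq0$) up to one constant. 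The conditions $h(0)=\sqrt6$, $h'(0)=0$ translate into $W(\sqrt6)=0$, which fixes that constant; denote the resulting quantities $h_c,v_c,W_c$. By Proposition \ref{prop:SolK}, $h_c$ produces the soliton precisely when there is a first $\beta>0$ with $h_c'<0$ on $(0,\beta)$ and $h_c(\beta)=\sqrt2$ (the further requirement $h_c(\beta)h_c''(\beta)=1$ is automatic, since $h\,h''=(4-h^2)/2$ wherever $h'=0$). In terms of $v_c$ this means: $v_c(\sqrt2)=0$, $v_c>0$ on $(\sqrt2,\sqrt6)$, and $h_c$ reaches $\sqrt2$ in finite time.

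The first observation is that the positivity is automatic once the two end-conditions hold: because $W_c'(h)=h^3(4-h^2)e^{ch^2/2}$ is $>0$ on $(0,2)$ and $<0$ on $(2,\infty)$, $W_c$ strictly increases on $(0,2)$ and strictly decreases on $(2,\infty)$, so between two of its zeros straddling $h=2$ it is strictly positive; as $\sqrt2<2<\sqrt6$ this gives $v_c>0$ on $(\sqrt2,\sqrt6)$, and hence $\sqrt2$ is the first turning point and $h_c'<0$ on $(0,\beta)$. The same shape of $W_c$ together with $v_c'(\sqrt2)=\sqrt2>0$ (read off from the reduced ODE) shows, by a standard Sturm-type estimate, that $h_c$ meets $\sqrt2$ at a finite $\beta>0$. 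Thus $h_c$ yields the soliton $\iff v_c(\sqrt2)=0\iff W_c(\sqrt2)=0$. Plugging $h^2=2$ into the closed form of $W_c$ and simplifying reduces this to the transcendental equation
\begin{equation*}
F(c):=(3c^2-4c+2)\,e^{2c}+c^2-2=0,\qquad c\neq0
\end{equation*}
(the value $c=0$ is discarded anyway: a non-trivial soliton needs $c\neq0$, and in fact $v_0=(6-h^2)/6$ does not vanish at $\sqrt2$).

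The decisive step is the analysis of $F$ on $(-\infty,0)$. One computes $F(0)=0$ and $F'(c)=2c\bigl(e^{2c}(3c-1)+1\bigr)$. For the factor $g(c):=e^{2c}(3c-1)+1$ one has $g'(c)=e^{2c}(6c+1)$, so $g$ decreases on $(-\infty,-1/6)$ and increases on $(-1/6,\infty)$; with $g(-\infty)=1>0$, $g(-1/6)<0$, $g(0)=0$ this forces $g$ to have a unique zero $c_1<-1/6$ on $(-\infty,0)$, with $g>0$ on $(-\infty,c_1)$ and $g<0$ on $(c_1,0)$. Hence $F$ strictly decreases on $(-\infty,c_1)$ and strictly increases on $(c_1,0)$; since $F(c)\to+\infty$ as $c\to-\infty$ and $F(0)=0$ (so $F(c_1)<0$), $F$ has exactly one zero on $(-\infty,0)$, a unique $c_o<c_1<0$; and on $(0,\infty)$, where $F'=2cg>0$, $F$ increases from $0$ and has no zero. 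This gives existence and uniqueness of $c_o$ and shows $c_o<0$. The comparison of $h_c(\beta)$ with $\sqrt2$ then follows from the sign of $W_c(\sqrt2)=\tfrac{4e^c}{c^3}F(c)$: since $W_c$ crosses its unique zero in $(0,2)$ while increasing, $h_c(\beta)$ lies on one side of $\sqrt2$ or the other according to the sign of $F(c)$, which by the monotonicity of $F$ just established is of one sign on $(-\infty,c_o)$ and the opposite on $(c_o,0)$. (Equivalently, differentiating the reduced ODE in $c$: $w:=\partial v_c/\partial c$ solves $h\,w'+(4+ch^2)w=-h^2v_c$ with $w(\sqrt6)=0$, so its integrating-factor representation forces $w>0$ on $(\sqrt2,\sqrt6)$, whence $h_c(\beta)$ is strictly monotone in $c$.) Finally $F$ is real-analytic with a single simple sign change and $F(-1)>0>F(-1/2)$, so $c_o\in(-1,-1/2)$; bisection or Newton's method then gives $c_o\approx-0.5276195198969626$.

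I expect two points to require care. The first is bookkeeping: carrying out the reduction and the explicit integration without sign errors, and grinding $W_c(\sqrt2)=0$ down to the clean polynomial-times-exponential form $F(c)=0$. The second, which is the genuine content, is the qualitative study of $F$ on $(-\infty,0)$: everything hinges on pinning down the single sign change of the auxiliary function $g(c)=e^{2c}(3c-1)+1$, which requires differentiating one more time and making the resulting monotonicity chain airtight — this is the step on which I would spend the most effort.
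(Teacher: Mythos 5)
Your proposal is correct in substance but takes a genuinely different route from the paper. The paper argues qualitatively in the phase plane: for two parameters $c<d$ it compares the two vector fields at a common point $(h,h')$, deduces an ordering of $h''$, concludes that the first minimum value $m_c=h_c(\beta)$ depends monotonically and continuously on $c$, and outsources the transcendental equation and the numerical value to Cao. You instead integrate the equation explicitly: with $v=(h')^2$ as a function of $h$, equation (\ref{eqn:SolR}) becomes the linear equation $h\,dv/dh+(4+ch^2)v=4-h^2$, the integrating factor $h^4e^{ch^2/2}$ gives $W_c(h)=\int_{\sqrt6}^{h}t^3(4-t^2)e^{ct^2/2}\,dt$, and the soliton condition reduces to $W_c(\sqrt2)=\tfrac{4e^c}{c^3}F(c)=0$ with $F(c)=(3c^2-4c+2)e^{2c}+c^2-2$, which is exactly Cao's function $k$. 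I checked the reduction, the identity $W_c(\sqrt2)=\tfrac{4e^c}{c^3}F(c)$, the formulas $F'(c)=2c\bigl(e^{2c}(3c-1)+1\bigr)$ and $g'(c)=e^{2c}(6c+1)$, and the auxiliary facts you invoke ($W_c>0$ between zeros straddling $h=2$, $v_c'(\sqrt2)=\sqrt2$, integrability of $1/\sqrt{v_c}$ at the simple zeros, $hh''=(4-h^2)/2$ at critical points): all correct. Your route buys a self-contained, quantitative proof of existence, uniqueness and the bracketing $c_o\in(-1,-1/2)$, where the paper's comparison argument is softer.

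The one thing you must not leave implicit is the \emph{direction} of the monotone dependence of $m_c$ on $c$, since that is the content of the second sentence of the proposition. Your own formula settles it: for $c<0$ the prefactor $4e^c/c^3$ is negative, and since $W_c$ is strictly increasing on $(0,2)$ with $W_c(m_c)=0$, one has $m_c>\sqrt2$ iff $W_c(\sqrt2)<0$ iff $F(c)>0$ iff $c<c_o$. (Concrete check: for $c=-1$ the antiderivative is $2s^2e^{-s/2}$, so $m_{-1}$ solves $h^4e^{-h^2/2}=36e^{-3}\approx 1.79$, giving $m_{-1}\approx 1.58>\sqrt2$ while $-1<c_o$.) So $m_c$ is a \emph{decreasing} function of $c$, and $h_c(\beta)>\sqrt2$ forces $c<c_o$ — the opposite of what the proposition asserts. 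The discrepancy traces to a slip in the paper's subtraction step, where the term produced is $h_{d,s_2}^2(t_1)\,h_{d,s_2}'^2(t_1)(d-c)$ (a square, hence positive) rather than the first power $h_{d,s_2}'(d-c)$ used to fix the sign; note that the final paragraph of the paper's own proof ("if $m_d<\sqrt2<m_c$ then $c<c_o<d$") already agrees with your direction and contradicts its "increasing" claim. Uniqueness of $c_o$ is unaffected either way, but you should state the direction explicitly rather than saying $h_c(\beta)$ "lies on one side or the other", both to complete the proof and because it corrects the statement being proved.
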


\begin{proof}
Given a constant  $c\neq 0$ and $s>0$ we have a solution $h_{c,s}$ to (\ref{eqn:SolR}) satisfying $h_{c,s}(0)=s$, ${h_{c,s}}'(0)=0$. Note that if $h_{c,s}(t)$ is solution to (\ref{eqn:SolR}), with ${h_{c,s}}'(0)=0$, then $h_{c,s}(-t)$ is also a solution. Or more generally, a solution to (\ref{eqn:SolR}) will be symmetric around any critical point.
\newline

Let $v=h'$, then  from equation (\ref{eqn:SolR}) we obtain the following non-linear autonomous system:
\begin{eqnarray}
\label{eqns:system}
h'&=&v \nonumber\\
v'&=&\frac{2}{h}-2\frac{v^2}{h}-\frac{h}{2}(1+cv^2) 
\end{eqnarray}

We may depict its corresponding phase portrait in coordinates $(h, v)$ in the right-half plane $\mathbb{R}^{+}\times \mathbb{R}\subset \mathbb{R}\times \mathbb{R}$. Observe that $(2,0 )$ is the only critical point of the system.
\newline

Note that if $h_{c,s}$ has a maximum and a minimum, then it is periodic and the corresponding integral curve of (\ref{eqns:system}) is closed. Note also that $h_{c,s}$ has a maximum at $0$ if $s>2$, and if it has its first minimum at $\beta_{c,s}>0$, then $0<h_{c,s}(\beta_{c, s_1})<2$. In this case ${h_{c,s}}'<0$ on $(0, \beta_{c,s})$.  
\newline

Let $d$ and $c$, with $c<d$,  and $s_1, s_2>2$, such that the solutions $h_{c,s_1}$ and $h_{d,s_2}$ to (\ref{eqn:SolR}), with initial conditions ${h_{c,s_1}}'(0)={h_{d,s_2}}'(0)=0$, have first minimums at $\beta_{c,s_1}$ and $\beta_{d,s_2}$, respectively. Denote by $\gamma_{c}^{s_1}$ and $\gamma_{d}^{s_2}$ the corresponding integral curves, they are closed curves. 
\newline

If $\gamma_{c}^{s_1}$ and $\gamma_{d}^{s_2}$ intersect in the lower-half plane, there exist $t_1, t_2>0$ such that $h_{c,s_1}(t_2)=h_{d,s_2}(t_1)>0$, and ${h_{c,s_1}}'(t_2)={h_{d,s_2}}'(t_1)<0$. Then we have two equations:
\begin{equation}
\label{eqn:1}
2h_{c, s_1}(t_2)h''_{c, s_1}(t_2)+4h'^2_{c, s_1}(t_2)-4+h_{c, s_1}^2(t_2)(1+ch'^2_{c, s_1}(t_2))=0
\end{equation}
\begin{equation}
\label{eqn:2}
2h_{d, s_2}(t_1)h''_{d, s_2}(t_1)+4h'^2_{d, s_2}(t_1)-4+h_{d, s_2}^2(t_1)(1+dh'^2_{d, s_2}(t_1))=0
\end{equation}

Also we obtain:
\begin{equation}
\label{eqn:3}
2h_{d, s_2}(t_1)h_{c, s_1}''(t_2)+4h'^2_{d, s_2}(t_1)-4+h_{d, s_2}^2(t_1)(1+ch'^2_{d, s_2}(t_1))=0
\end{equation}

If we substract (\ref{eqn:3}) from (\ref{eqn:2}):
\begin{equation*}
2h_{d, s_2}(t_1)(h''_{d, s_2}(t_1)-h''_{c, s_1}(t_2))+h_{d, s_2}^2(t_1)h'_{d, s_2}(t_2)(d-c)=0
\end{equation*}

Therefore, since $h_{d, s_2}(t_1)>0$ and $h'_{d, s_2}(t_2)<0$, we obtain $h''_{d, s_2}(t_1)>h''_{c, s_1}(t_2)$. It follows that, since 
$$\frac{d}{dt}\gamma_{c}^{s_1}=(h_{c, s_1}', h_{c, s_1}'')=(h_{c, s_1}',\frac{2}{h_{c, s_1}}-2\frac{{h'_{c, s_1}}^2}{h_{c, s_1}}-\frac{h_{c, s_1}}{2}(1+c{h'_{c, s_1}}^2) ),$$
and
$$\frac{d}{dt}\gamma_{d}^{s_2}=(h_{d, s_2}', h_{d, s_2}'')=(h_{d, s_2}',\frac{2}{h_{d, s_2}}-2\frac{{h_{d, s_2}'}^2}{h_{d, s_2}}-\frac{h_{d, s_2}}{2}(1+d{h'_{d, s_2}}^2) ),$$

if $\gamma_{c}^{s_1}$ and $\gamma_{d}^{s_2}$ intersect in the lower-half plane, after the crossing point $\gamma_{c}^{s_1}$ goes above $\gamma_{d}^{s_2}$. It follows that if $h_{c, s_1}$ has a minimum, so does $h_{d, s_2}$, and $h_{d, s_2}(\beta_{d, s_2})>h_{c, s_1}(\beta_{c, s_1})$. 
\medskip
\begin{figure}
    \centering
    \includegraphics[width=.9\textwidth]{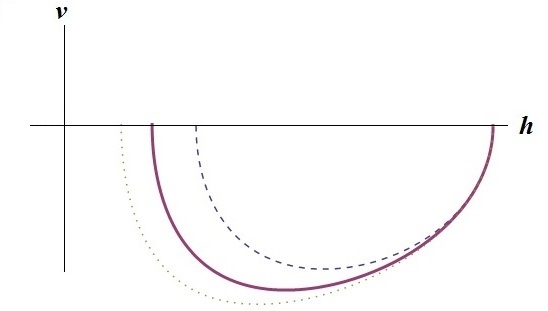}
    \caption{\footnotesize A diagram depicting the behavior of three solutions of the system (\ref{eqns:system}) with same initial conditions. For $c<c_o<d$, the solid line corresponds to $h_{c_o,\sqrt{6}}$, the dashed one to $h_{d,\sqrt{6}}$ and the dotted one to $h_{c, \sqrt{6}}$.}
    \label{fig:phaseportrait}
\end{figure}
\medskip

Now, take $s_1=s_2=\sqrt{6}$, $c<d$ as before, and such that $h_c=h_{c, \sqrt{6}}$ and $h_d=h_{d, \sqrt{6}}$, both have positive minimums $m_c$ and $m_d$, respectively. We have their corresponding integral curves $\gamma_c$ and $\gamma_d$.

Define the function $min: (c, d)\to(0, 2)$ which assigns to each $a\in (c, d)$ the first minimum value of the solution $h_a^{\sqrt{6}}$. 
\newline

\noindent{\textbf{Claim:} The function $min$ is well defined, continuous and increasing.}
\medskip

If $\gamma_c$ goes under $\gamma_d$, it has to intersect some $\gamma_d^s$, with $2<s<\sqrt{6}$. But then $\gamma_c$ also intersects every integral curve $\gamma_d^t$, with $t<s$. Take $\gamma_d^{\frac{s+\sqrt{6}}{2}}$, for instance. Since $d>c$, by previous discussion, $\gamma_c$ stays below $\gamma_d^{\frac{s+\sqrt{6}}{2}}$ and cannot intersect $\gamma_d^s$. Then, in particular $m_c<m_d$. Same argument applies to each $a\in(c, d)$, the solution $h_{a, \sqrt{6}}$ has a minimum. Then the associated integral curve is closed, since $\sqrt{6}>2$. We have that $m_d<min(a)<m_c$.
\medskip

Therefore, it can only exist one $c_o\in(c, d)$ such that the solution $h_{c_o,\sqrt{6}}$ to (\ref{eqn:SolR}) with $h_{c_o, \sqrt{6}}'(0)=0$ has as minimum value $\sqrt{2}$. 
\newline

Finally, the value of $c_o$ can be computed numerically by solving the equation (\ref{eqn:SolR}) for different parameters $c$, and initial conditions $h(0)=\sqrt{6}$ and $h'(0)=0$. By the claim, if $c<d$ are two values for which the solutions $h_c$ and $h_d$ each have minimums $m_c, m_d$, and $m_d<\sqrt{2}<m_c$ then we know that $c<c_o<d$. Then $c_o$ can be approximated by interpolation.	
\newline

Cao  also obtained the value of $c_o$ as root of the function:
\begin{equation*}
k(x)=e^{2x} (2 - 4 x + 3 x^2) - 2 + x^2.
\end{equation*}

See Lemma 4.1. in \cite{Cao1}.
\end{proof}

\section{Scalar curvature and the Yamabe equation}

In this section we will write the Yamabe equation on $\mathbb{CP}^2\#\overline{\mathbb{CP}^2}$ for a $U(2)-$invariant function. Recall that $\mathbf{g}$ has positive Ricci curvature \cite{Cao1}. We will use this to see that $S_\mathbf{g}$ is a decreasing function of $t$ on $S^3\times(\alpha, \beta)$. For the sake of completeness we give a proof of the positivity of Ricci curvature using our description of $\mathbf{g}$.

\begin{pro}
\label{prop:PosRic}
The Koiso- Cao soliton $\mathbf{g}$ has positive Ricci curvature.
\end{pro}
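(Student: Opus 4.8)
The plan is to prove positivity of $Ric(\mathbf{g})$ directly from the explicit formulas \eqref{eqn:RicciCurv1}–\eqref{eqn:RicciCurv3}, using the K\"ahler identity \eqref{eqn:CondK}, i.e.\ $f=-hh'$, the soliton ODE \eqref{eqn:SolR}, and the boundary behaviour from Proposition~\ref{prop:SolK}. Since $\mathbf{g}$ is K\"ahler, $Ric$ is $J$-invariant, so $Ric(H,H)=Ric(X/f,X/f)$ and $Ric(Y/h,Y/h)=Ric(Z/h,Z/h)$; hence it suffices to show two quantities are positive on $(\alpha,\beta)$: the ``vertical'' eigenvalue $R_1:=Ric(H,H)$ and the ``horizontal'' eigenvalue $R_2:=Ric(Y/h,Y/h)$. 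On the open orbit both follow from algebraic manipulation; the delicate part is checking that these quantities remain positive as $t\to\alpha^+$ and $t\to\beta^-$, where $f\to 0$ and several terms in \eqref{eqn:RicciCurv1}–\eqref{eqn:RicciCurv3} are singular expressions of the $0/0$ type.

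First I would rewrite everything in terms of $h$ alone. From $f=-hh'$ one gets $f'=-h'^2-hh''$ and $f''=-3h'h''-hh'''$, so $f''/f=(3h'h''+hh''')/(hh')$ and $f'h'/(fh)=(h'^2+hh'')/h^2$, while $f^2/h^4=h'^2/h^2$. Substituting into \eqref{eqn:RicciCurv1} gives $R_1=-\dfrac{3h'h''+hh'''}{hh'}-\dfrac{2h''}{h}$, and into \eqref{eqn:RicciCurv3} gives $R_2=-\dfrac{h''}{h}-\dfrac{h'^2+hh''}{h^2}-\dfrac{h'^2}{h^2}+\dfrac{4}{h^2}-\dfrac{2h'^2}{h^2}=-\dfrac{h''}{h}-\dfrac{4h'^2-4}{h^2}-\dfrac{h''}{h}$, which via \eqref{eqn:SolR} (rearranged as $4h'^2-4=-2hh''-h^2(1+ch'^2)$) collapses to $R_2=-\dfrac{2h''}{h}+\dfrac{2hh''+h^2(1+ch'^2)}{h^2}=1+ch'^2$. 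So the horizontal Ricci eigenvalue is simply $1+ch'^2$; since $c=c_o<0$ but $|h'|$ is controlled, I expect $1+c_oh'^2>0$ to follow from an a priori bound $h'^2<1/|c_o|$ along the solution, which one can extract from \eqref{eqn:SolR} itself: at a point where $h'^2$ is largest, $h''=0$ (or use the boundary values $h(\alpha)=\sqrt6$, $h(\beta)=\sqrt2$) and \eqref{eqn:SolR} forces $4h'^2-4+h^2(1+ch'^2)\le 0$, i.e.\ $h'^2\le \dfrac{4-h^2}{4+ch^2}$, and one checks the right side stays below $1/|c_o|$ on the relevant range of $h$.

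For $R_1$ I would similarly try to reduce it to something manifestly positive. Differentiating \eqref{eqn:SolR} (as already done in the text, yielding $hh'''+5h'h''+hh'(1+c(hh''+h'^2))=0$) lets me solve $hh'''=-5h'h''-hh'(1+c(hh''+h'^2))$, so $\dfrac{3h'h''+hh'''}{hh'}=\dfrac{3h'h''-5h'h''}{hh'}-(1+c(hh''+h'^2))=-\dfrac{2h''}{h}-1-c(hh''+h'^2)$; plugging into $R_1$ gives $R_1=\dfrac{2h''}{h}+1+c(hh''+h'^2)-\dfrac{2h''}{h}=1+c(hh''+h'^2)$. So $R_1=1+c(hh''+h'^2)=1+\tfrac{c}{2}(h^2)''$. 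Now from \eqref{eqn:SolR}, $c(hh''+h'^2)=c\big(h'^2 + \tfrac12(4-4h'^2-h^2(1+ch'^2))\big)=c\big(2 - h'^2 - \tfrac{h^2}{2}(1+ch'^2)\big)$, so $R_1 = 1 + 2c - ch'^2 - \tfrac{ch^2}{2}(1+ch'^2)=1+2c-ch'^2-\tfrac{ch^2}{2}R_2$. Positivity of $R_1$ then reduces, given $R_2>0$ and $c<0$, to an inequality purely in $h$ and $h'^2$ on the interval, again amenable to the a priori bound on $h'^2$ together with $\sqrt2\le h\le\sqrt6$.

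The main obstacle will be the boundary analysis and pinning down the numerical smallness $|c_o|<1/6$ or whatever precise bound the final inequalities demand: the formulas for $R_1,R_2$ I derived are smooth in $h,h'$ (no $f$ in the denominator anymore), so continuity extends positivity to $[\alpha,\beta]$ once it holds on the open interval, but verifying the closing inequality $1+2c-ch'^2-\tfrac{ch^2}{2}(1+ch'^2)>0$ genuinely uses the specific value $c_o\approx-0.528$ from Proposition~\ref{prop:Uniquec} (a crude bound like $|c|<1/2$ may not suffice and one may need $h'^2$ estimated more carefully near each singular orbit, where $h'\to0$ helps). I would therefore organize the proof as: (i) derive $R_1=1+c(hh''+h'^2)$ and $R_2=1+ch'^2$; (ii) establish the a priori bound on $h'^2$ from \eqref{eqn:SolR} and $\sqrt2\le h\le\sqrt6$; (iii) conclude $R_2>0$ immediately and $R_1>0$ from the reduced inequality; (iv) note smoothness to cover the endpoints. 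If a clean a priori bound proves elusive, the fallback is to invoke Cao's result directly as cited, but the self-contained route above is the one I would attempt first.
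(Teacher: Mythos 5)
Your proposal is correct, and it lands on exactly the same two key identities that drive the paper's proof, namely $Ric(H,H)=1+c(hh''+h'^2)$ and $Ric(Y/h,Y/h)=1+ch'^2$; all of your intermediate algebra checks out. The difference is in how you reach those identities and how you close the $H$-direction estimate. The paper gets both eigenvalue formulas in one line from the soliton equation itself, $Ric=\mathbf{g}-\hess(u)$ with $u=-\tfrac{c}{2}h^2$, whereas you reconstruct them from the explicit curvature expressions (\ref{eqn:RicciCurv1})--(\ref{eqn:RicciCurv3}) combined with (\ref{eqn:SolR}) and its derivative; this is longer but self-contained and doubles as a consistency check, and it is legitimate on $(0,\beta)$ where $h'\neq 0$. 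Your argument for $1+ch'^2>0$ (at an interior maximum of $h'^2$ one has $h''=0$, hence $h'^2=(4-h^2)/(4+ch^2)\le 2/(4+2c)\approx 0.68<-1/c$) is essentially the paper's Claim in a cleaner packaging. For $Ric(H,H)$ the paper bounds $hh''+h'^2<1$ via a two-case analysis comparing $h^2/2$ with $-1/c$, while you substitute the ODE once more to get $Ric(H,H)=1+2c-ch'^2-\tfrac{ch^2}{2}(1+ch'^2)$ and then use $h^2\ge 2$ together with the a priori bound on $h'^2$; this closes (one gets roughly $\ge 1+2c+|c|\cdot 0.64\approx 0.28>0$), and you are right that the specific value $c_o\approx -0.528$ is genuinely needed since $1+2c<0$. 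Two small points to tidy up when writing this in full: at an interior critical point of $h'^2$ you should rule out $h'=0$ (else the maximum of $h'^2$ would be zero), and continuity only yields nonstrict positivity at the endpoints, so either evaluate there directly (the values are $1-c$, $1+c$, and $1$) or note that your lower bounds are uniform on the open interval.
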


\begin{proof}

For a gradient shrinking soliton we have the equation (\ref{eqn:Solg}). Note that since $Hess(u)$ is diagonal in the orthonormal basis $E$, then so is the Ricci curvature of $\mathbf{g}$. Recall also that $\mathbf{g}$ is K\"ahler, then
\begin{eqnarray*}
Ric\left(\frac{X}{f}, \frac{X}{f}\right)=Ric(H, H)\\
Ric\left(\frac{Z}{h}, \frac{Z}{h}\right)=Ric\left(\frac{Y}{h}, \frac{Y}{h}\right).
\end{eqnarray*}

Then we only have to show that the functions $Ric(H, H)$ and $Ric(\frac{Y}{h}, \frac{Y}{h})$ are positive. Using the Hessian (\ref{eqns:Hess}) and the expression of $u$ (\ref{fun:Ricci}) we obtain
\begin{eqnarray*}
Ric(H, H)&=&g(H, H)-Hess(u)(H, H)\\
&=&1-u''\\
&=&1+c(hh''+h'^2),
\end{eqnarray*}
and 
\begin{eqnarray*}
Ric\left(\frac{Y}{h}, \frac{Y}{h}\right)&=&g\left(\frac{Y}{h}, \frac{Y}{h}\right)-Hess(u)\left(\frac{Y}{h}, \frac{Y}{h}\right)\\
&=&1-\frac{h'u'}{h}\\
&=&1+ch'^2.
\end{eqnarray*}

First we will lead with the Ricci curvature in the direction of $Y$. From equation (\ref{eqn:SolR}) we rewrite
\begin{equation*}
(hh''+h'^2)+h'^2-2+\frac{h^2}{2}(1+ch'^2)=0
\end{equation*}
which is the same as:
\begin{equation}
\label{eqn:f'}
f'=h'^2-2+\frac{h^2}{2}(1+ch'^2)
\end{equation}

Let $A(x)=x-2+\frac{b^2}{2}(1+cx)$. If we differentiate with respect to $x$,
\begin{equation*}
\frac{d}{d x}A=1+c\frac{b^2}{2}
\end{equation*}

The critical points of $A$ occurs when $\frac{b^2}{2}=-1/c$. 
\newline

\noindent{\textbf{Claim:} $1+c h'^2>0$. }
\medskip

We have a critical point of the function $(1+c h'^2)$ if $h'=0$ or $h''=0$. Observe that that $h'=0$ and $h''=0$ can not happen simultaneously, else the value of the critical points would be $2$. This is not possible since critical values have to be $\sqrt{2}$ and $\sqrt{6}$. If $h'=0$, the claim follows and then $Ric(\frac{Y}{h}, \frac{Y}{h})>0$ at $0$ and $\beta$. For those points where $h''=0$,  from equation (\ref{eqn:SolR}):
\begin{equation}
h'^2=\frac{4-h^2}{4+ch^2}
\end{equation}
We will considerate two cases. 

\begin{enumerate}
\item [i)] If $3>\frac{h^2}{2}\geq -1/c$. Then, $4-h^2<4+2/c$. From the value of $c=-0.5276195198969626$ we have $4+2/c<1$. Also, $4+6c<4+ch^2$. If follows that
\medskip

\begin{equation*}
h'^2< \frac{4+2/c}{4+6c}<\frac{1}{4+6c} < -1/c,
\end{equation*}
and then $1+ch'^2>0$. 

\item [ii)] If $1<\frac{h^2}{2}<-1/c$, we have $4-h^2<2$ and $2<4+ch^2$. Then:
\begin{equation*}
h'^2=\frac{4-h^2}{4+ch^2}<1
\end{equation*}
The claim is proved, and then $Ric(\frac{Y}{h}, \frac{Y}{h})>0$.
\end{enumerate}
\medskip

On the other hand, for the Ricci curvature in the direction of $H$, note that,  since $-1<c<1$ and $h(0)h''(0)=-h(\beta)h''(\beta)=1$, $Ric(H, H)$ is positive at $0$ and $\beta$. Then it remains to prove that $1+c(hh''+h'^2)>0$ for every $t\in(0, \beta)$. 

Since $c<0$, we have to prove $f'=(hh''+h'^2)>1/c\approx -1.8$. It is enough to see $f'>-1$. Hence we will show the bound for $f'$ in the two previous cases. 
\medskip

Using equation (\ref{eqn:f'}), the fact that $Ric(\frac{Y}{h}, \frac{Y}{h})>0$ and the value of $c$:
\begin{enumerate}

\item [i)] If $\frac{h^2}{2}\geq -1/c$ we have, 
\begin{eqnarray*}
f'&=&h'^2-2+\frac{h^2}{2}(1+ch'^2)\\
&\geq& h'^2-2-\frac{1}{c}(1+ch'^2)\\
&=&-2-1/c>-1
\end{eqnarray*}

\item [ii)] If $1<\frac{h^2}{2}<-1/c$, first note that $(1+\frac{ch^2}{2})>0$, and since $1<\frac{h^2}{2}$:
\begin{eqnarray*}
f'&=&h'^2(1+\frac{ch^2}{2})+\frac{h^2}{2}-2\\
&>&\frac{h^2}{2}-2>-1
\end{eqnarray*}
\end{enumerate}

Therefore, $f'>-1$, and so $Ric(H, H)>0$.
\end{proof}
\newline

Let $\psi$ be a smooth function on $S^3\times (0,\beta)$ invariant by the action of $U(2)$, then by (\ref{eqns:Hess}) we get
\begin{eqnarray}
\label{Laplacian}
\triangle_g\psi&=&-\frac{f' \psi'}{f}-2\frac{h' \psi'}{h}-\psi''\nonumber\\
&=&-\psi'\left(\frac{hh''+h'^2}{hh'}+2\frac{h'}{h}\right)-\psi''\nonumber\\
&=&-\psi'\left(\frac{h''}{h'}+3\frac{h'}{h}\right)-\psi''.
\end{eqnarray}

Then for $u=-\frac{ch^2}{2}$ we have:
\begin{eqnarray*}
\triangle u&=&\frac{d}{dt}\left(\frac{ch^2}{2}\right)\left(\frac{h''}{h'}+3\frac{h'}{h}\right)+\frac{d^2}{dt^2}\left(\frac{ch^2}{2}\right)\\
&=&chh''+3ch'^2+chh''+ch'^2\\
&=&4ch'^2+2 chh''
\end{eqnarray*}

Now, taking the trace of the soliton equation (\ref{eqn:Solg}) we obtain
\begin{equation*}
S-\triangle u=4
\end{equation*}
and therefore the scalar curvature is:
\begin{equation}
\label{eqn:Scalar}
S=4ch'^2+2ch h''+4.
\end{equation}

From Proposition \ref{prop:SolK} and Proposition \ref{prop:Uniquec} we obtain:
\begin{eqnarray}
\label{value:S}
S(0)&=&4-2c>0\nonumber\\
S(\beta)&=&4+2c>0
\end{eqnarray}

\begin{pro}
\label{prop:PosScal}
The scalar curvature of the Koiso-Cao soliton is decreasing as a function of $t\in[0, \beta]$.
\end{pro}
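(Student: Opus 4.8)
The plan is to compute $S' = \frac{dS}{dt}$ directly from the formula \eqref{eqn:Scalar} and show it is negative on $(0,\beta)$, then check the endpoints. Differentiating $S = 4ch'^2 + 2chh'' + 4$ gives $S' = 8ch'h'' + 2ch'h'' + 2chh''' = 2ch'(5h'' + \tfrac{h h'''}{h'}\cdot h'/h' ) $ — more cleanly, $S' = 2c\bigl(5h'h'' + h h'''\bigr)$. Now I would use the differentiated soliton equation: recall from the derivation of \eqref{eqn:SolH} that $h$ satisfies $\frac{h'''}{h'} + \frac{5h''}{h} + 1 + c(hh''+h'^2) = 0$, i.e. $hh''' + 5h'h'' = -h h' \bigl(1 + c(hh''+h'^2)\bigr) = -hh'(1+cf')$, where $f' = hh''+h'^2$. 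Substituting, $S' = -2c\,hh'\,(1+cf') = -2c\,hh'\,Ric(H,H)$.

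The key observation is then that all three factors have a definite sign on $(0,\beta)$: we have $h>0$ throughout; by Proposition \ref{prop:Uniquec} (with $\alpha=0$) the relevant solution has $h'<0$ on $(0,\beta)$; we have $Ric(H,H) = 1+c(hh''+h'^2) > 0$ by Proposition \ref{prop:PosRic}; and $c = c_o < 0$. Hence $S' = -2c\,hh'\,Ric(H,H) = -2(\text{neg})(\text{pos})(\text{neg})(\text{pos}) < 0$ on $(0,\beta)$. So $S$ is strictly decreasing on the interior.

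For the endpoints I would note $h'(0)=h'(\beta)=0$, so $S'(0)=S'(\beta)=0$; but this is harmless for the monotonicity statement on the closed interval $[0,\beta]$ since $S$ is continuous and strictly decreasing on the open interval — a continuous function that is strictly decreasing on $(0,\beta)$ is strictly decreasing on $[0,\beta]$. Alternatively, one can just invoke the computed values \eqref{value:S}: $S(0) = 4-2c > 4+2c = S(\beta)$ since $c<0$, which already shows $S(0) > S(\beta)$ and is consistent; combined with interior monotonicity this gives the full claim.

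I do not expect a real obstacle here: the whole proof is the algebraic identity $S' = -2c\,hh'\,Ric(H,H)$ together with the sign data already established in Propositions \ref{prop:Uniquec} and \ref{prop:PosRic}. The one point requiring a little care is the bookkeeping in the differentiation — making sure the term $hh'''$ is eliminated correctly via \eqref{eqn:SolH} rather than via the unsimplified equations \eqref{eqn:SolX} or \eqref{eqn:SolY} — and confirming that $h'$ does not vanish in the open interval, which is exactly the content of Proposition \ref{prop:Uniquec}.
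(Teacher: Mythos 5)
Your proposal is correct and follows essentially the same route as the paper: both differentiate \eqref{eqn:Scalar} to get $S'=2c(hh'''+5h'h'')$ and then use the positivity of $Ric(H,H)$ together with $c<0$ and $hh'<0$ on $(0,\beta)$ to conclude $S'<0$ (the paper reads off $hh'''+5h'h''=-hh'\,Ric(H,H)$ from the formula \eqref{eqn:RicciCurv1} for $Ric(H,H)$, whereas you derive the same identity from \eqref{eqn:SolH}; these are equivalent). Your explicit remark that the sign of $hh'$ is needed, and your handling of the endpoints where $h'$ vanishes, are points the paper leaves implicit.
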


\begin{proof}
Take the derivative with respect to $t$ of (\ref{eqn:Scalar}), then:
\begin{equation*}
S'=8ch'h''+2c(hh'''+h'h'')=10ch'h''+2chh'''
\end{equation*}

But,
\begin{eqnarray*}
Ric(H, H)&=&-\frac{f''}{f}-2\frac{h''}{h}\\
&=&-\frac{hh'''+5h'h''}{hh'}.
\end{eqnarray*}

Since $Ric(H, H)>0$, we have that $hh'''+5h'h''>0$. Therefore, since $c<0$ , $S'<0$ on $(0, \beta)$. 
\end{proof}
\newline

Let $\varphi$ be a smooth function invariant under the action of $U(2)$ on $\mathbb{CP}^2\#\overline{\mathbb{CP}^2}$. Then we identify $\varphi$ with a function $\phi:[0, \beta]\to \mathbb{R}$ such that $\phi'(0)=\phi'(\beta)=0$. We use previous calculation of the Laplacian (\ref{Laplacian}) to obtain the Yamabe equation of the Koiso-Cao soliton:
\begin{equation}\label{eqn:Yamabe}
6\phi'\left(\frac{h''}{h'}+3\frac{h'}{h}\right)+6\phi''+S_{\mathbf{g}}\phi=\lambda \phi^3
\end{equation}

Since Koiso-Cao soliton has positive scalar curvature, then $\lambda$ has to be positive. We fix $\lambda=1$. 

\section{Proof of Theorem \ref{thm:1}}

Let $\phi$ be a positive $U(2)-$invariant function on $S^3\times (0, \beta)$ with the K\"ahler metric $g$. In order to $\phi^2\cdot \mathbf{g}$ be a Riemannian metric on $\mathbb{CP}^2\#\overline{\mathbb{CP}^2}$ , $\phi$ must satisfy $\phi'(0)=\phi'(\beta)=0$.  Additionally, it has constant scalar curvature equals $1$ if $\phi$ satisfies the Yamabe equation (\ref{eqn:Yamabe}):
\begin{equation*}
6\phi'\left(\frac{h''}{h'}+3\frac{h'}{h}\right)+6\phi''+S_{\mathbf{g}}\phi= \phi^3
\end{equation*}

\noindent{Since:}
\begin{eqnarray*}
\lim_{t\to 0}\phi'\frac{h''}{h'}&=&\lim_{t\to 0}\left(\frac{\phi'h'''}{h''}+\phi''\right)\\
&=&\phi''(0)
\end{eqnarray*}

\noindent{then we have:}
\begin{equation*}
12\phi''(0)=\phi(0)(\phi^2(0)-S(0))
\end{equation*}

\noindent{For $t\in(0, \beta)$ with $\phi'(t)=0$,} 
\begin{equation*}
6\phi''(t)=\phi(t)(\phi^2(t)-S(t)).
\end{equation*}

\textbf{Claim:}  $\phi^2(0)\leq S(0)=4-2c$ (by (\ref{value:S})).
\medskip

If  $\phi^2(0) >S(0)$ then $0$ is a local minimum. $\phi$ increases and $S$ decreases. After $0$, at the next critical point of $\phi$ we would again have that $\phi^2>S$ and it would be a minimum.
\medskip 

In particular, same argument says that there is no local minimums on $[0, \beta)$. Therefore, $0$ is a local maximum and $\beta$ a local minimum, and there is no more critical points of $\phi$ on $(0,  \beta)$. Then $\phi$ is decreasing. 
\newline

\noindent{We have:}
\begin{equation*}
\sqrt{S(\beta)}\leq\phi(\beta)\leq\phi(0)\leq\sqrt{S(0)}
\end{equation*}

Remember that one $U(2)$-invariant solution to Yamabe equation is guaranteed (see Hebey-Vaugon \cite{HebeyVaugon}. It remains to prove uniqueness. If $s\in(\sqrt{S(\beta)}, \sqrt{S(0)}]$, let $\phi_s$ be a solution to the Yamabe equation (\ref{eqn:Yamabe}) such that $\phi_s(0)=s$. Let $s_1$, $s_2\in (\sqrt{S(\beta)}, \sqrt{S(0)}]$, $s_1\tl s_2$. We have $0$  is a local maximum for both solutions $\phi_{s_{1}}$ and $\phi_{s_{2}}$. Define $F(t):=\phi_{s_2}-\phi_{s_1}$.
\newline
 
\textbf{Claim:}\ \ $F$ has a local minimum at $0$.
\medskip

Note that $F(0)>0$. Since $0$ is a critical point of $\phi_{s_1}$ and $\phi_{s_2}$, so it is for $F$. Since $\phi_{s_1}$ and $\phi_{s_2}$ satisfy (\ref{eqn:Yamabe}), it follows that:
\begin{equation*}
F''(0)=\frac{1}{12}\left(\phi_{s_2}^3(0)-\phi_{s_2}(0)S(0)- (\phi_{s_1}^3(0)-\phi_{s_1}(0)S(0)\right)
\end{equation*}

Consider the function $v(x)=x^3-S(0)x$. Then:
\begin{equation*}
v'(x)=3x^2-S(0)
\end{equation*}

Note that  $x_o:=\sqrt{\frac{S(0)}{3}}$ is the only critical point, and $v$ is increasing on $(\sqrt{\frac{S(0)}{3}}, \sqrt{S(0)} )$.  
\medskip

We have, by (\ref{value:S}) and the value of $c$,
\begin{equation*}
S(0)=4-2c=5.0552\ \ \ \text{and}\ \ \ S(\beta)=4+2c=2.9447
\end{equation*}

then
\begin{equation*}
\sqrt{S(0)}=2.2483,\ \ \ \sqrt{S(\beta)}=1.716,\ \ \ \text{and}\ \ \ \sqrt{\frac{S(0)}{3}}=1.2981.
\end{equation*}

Hence, since
\begin{equation*}
\sqrt{\frac{S(0)}{3}}\tl\sqrt{S(\beta)}\tl\phi_{s_1}(0)\tl\phi_{s_2}(0)\leq\sqrt{S(0)},
\end{equation*}
it follows that $F''(0)\tgt 0$, and so $0$ is a local minimum of $F$.
\medskip

Hence, $F$ is positive and increasing on $(0, \varepsilon)$, for $\varepsilon\tgt 0$ enough small. Assume that there exists $t_o\in (\varepsilon, \beta]$ such that $F'(t_o)=0$, and take the first critical point. The discussion above applies for $t_o$, that is, we have:
\begin{equation*}
F''(t_o)=\frac{1}{6}\left(\phi_{s_2}^3(t_o)-\phi_{s_2}(t_o)S(t_o)- (\phi_{s_1}^3(t_o)-\phi_{s_1}(t_o)S(t_o)\right)
\end{equation*}

Similarly as before, consider the function $g(x)=x^3-S(t_o)x$. Then $x_1:=\sqrt{\frac{S(t_o)}{3}}$ is its only critical point, which is a minimum.  From the choosing of $t_o$ it follows :
\begin{equation*}
\sqrt{\frac{S(t_o)}{3}}\tl S(\beta)\tl\phi_{s_1}(t_o)\tl\phi_{s_2}(t_o)
\end{equation*}

Then $t_o$ is a local minimum of $F$. But this implies there are another $t_1\in(0, t_o)$ satisfying  $F'(t_1)=0$, which is a contradiction, since $t_o$ was the first point.
\newline

We have proven that $F'\tgt 0$ on $(0,\beta]$ and in particular $F'(\beta)=\phi_{s_2}(\beta)-\phi_{s_1}(\beta)$>0.  Then it cannot happen that $\phi_{s_1}'(\beta)=\phi_{s_2}'(\beta)=0$. This proves uniqueness.

\medskip

CIMAT, GUANAJUATO, GTO., M\'exico

\textit{E-mail address:} jonatan@cimat.mx

\end{document}